\newtheorem{theorem}{Theorem}[section]
\theoremstyle{definition}
\numberwithin{equation}{section}
\begin{document}

\title{Riesz's Theorem  for Lumer's Hardy Spaces}

\author{Marijan Markovi\'{c}}
\address{
Faculty of  Sciences and Mathematics\endgraf
University of Montenegro\endgraf
D\v{z}ord\v{z}a Va\v{s}ingtona bb\endgraf
81000 Podgorica\endgraf
Montenegro\endgraf
}

\email{marijanmmarkovic@gmail.com}

\begin{abstract}
In this      note we obtain a version of the well-known Riesz's theorem on conjugate harmonic functions for
Lumer's Hardy spaces $(Lh)^2(\Omega)$ on   arbitrary  domains   $\Omega$:         If a real-valued harmonic
function $U\in (Lh)^2(\Omega)$   has a harmonic conjugate $V$ on $\Omega$     (i.e., a real-valued harmonic
function such  that $U+ iV$ is analytic on $\Omega$), then $U+iV$ also belongs to $(Lh)^2(\Omega)$, and for
the normalized conjugate  we have the norm estimate                   $\|U+iV\|_{(Lh)^2(\Omega)}\le\sqrt{2}
\|U\|_{(Lh)^2(\Omega)}$, with the best possible constant.
\end{abstract}

\maketitle

\section{On Lumer's Hardy spaces}
Let $\mathbf{U}=\{z\in \mathbf{C}:|z|<1\}$ be the unit disk  and let $\mathbf{T}=\{z\in \mathbf{C}:|z|=1\}$
be the  unit circle.   For a function  $f$ on $\mathbf{U}$ and $r\in (0,1)$ we denote by $f_r$ the function
$f_r (\zeta) =  f(r\zeta)$, $\zeta\in  \overline{\mathbf{U}}$.

The harmonic Hardy space $h^p$, for   $p\in (1,\infty)$, consists of all harmonic complex-valued  functions
$U$   on $\mathbf{U}$ for  which the   integral mean
\begin{equation*}
M_p(U,r)  = \left\{ \int_\mathbf{T} |U_r(\zeta)|^p\frac{|d\zeta|}{2\pi}\right\}^{1/p}
\end{equation*}
remains bounded as $r$ approaches $1$. Since $|U|^p$ is  subharmonic on  $\mathbf{U}$,   the integral  mean
$M_p(U,r)$ is  increasing in $r$. The  norm  on $h^p$ is  given by
\begin{equation*}
\|U\|_p = \lim_{r\rightarrow 1} M_p(U,r).
\end{equation*}
The analytic Hardy space $H^p$ is the subspace of $h^p$ that contains all analytic functions.       For the
theory of                                                         Hardy spaces in the unit disk we refer to
\cite{DUREN.BOOK, FISHER.BOOK, GARNETT.BOOK, RUDIN.BOOK}.

There  are  generalizations of Hardy spaces for other   domains in $\mathbf{C}$. The generalizations     we
consider here                                                          are known   as  Lumer's Hardy spaces
\cite{DUREN.BOOK, FISHER.BOOK, LUMER.PARIS, LUMER-NAIM, RUDIN.TAMS}.          We mention   below some facts
regarding  these   spaces that we    will      need.

The harmonic    Lumer's Hardy  space $(Lh)^p(\Omega)$ contains all harmonic complex-valued   functions  $U$
on a domain $\Omega\subseteq \mathbf{C}$ such that         the subharmonic function  $|U|^p$ has a harmonic
majorant on $\Omega$.     In that case, the  function  $|U|^p$ has the  least harmonic majorant on $\Omega$.
Let it be denoted by  $H_U$.  For $\zeta_0\in \Omega$ one introduces a  norm  on  $(Lh)^p (\Omega)$  in the
following way:
\begin{equation}\label{EQ.LN}
\|U\|_ {p,\zeta_0} = H_U ^{1/p} (\zeta_0).
\end{equation}
The   different  norms on  $(Lh)^p(\Omega)$      that arise by  selecting different elements of the  domain
$\Omega$ are mutually equivalent.  The analytic Lumer's Hardy  space   $(LH)^p(\Omega)$ is the subspace  of
$(Lh)^p(\Omega)$   that  consists of all analytic functions. The two spaces  $(Lh)^p(\mathbf{U})$ and $h^p$
coincide (as do $(LH)^p(\mathbf{U})$ and $H^p$). The norms on these spaces are equal, if we select $\zeta_0
=0$ for the Lumer  case.

Lumer's Hardy  spaces are conformally invariant in the following sense: If $\Phi$ is a conformal mapping of
a domain $\tilde{\Omega}$ onto $\Omega$,  then a  function  $U$ belongs  to    $(Lh)^p(\Omega)$ if and only
if  $\tilde{U}=U\circ\Phi$  belongs to $(Lh)^p (\tilde{\Omega})$. The mapping $\Phi$ induces  an  isometric
isomorphism $U\rightarrow\tilde{U}$ of the space $(Lh)^p(\Omega)$ onto $(Lh)^p (\tilde{\Omega})$, since the
equality for the least harmonic majorants $H_U \circ\Phi = H_{\tilde{U}}$ implies that $\|U\|_{p,\zeta_0} =
\|\tilde{U}\|_{p,\tilde{\zeta_0}}$, where $\tilde{\zeta_0} \in \tilde{\Omega}$ satisfies      $\zeta_0=\Phi
(\tilde{\zeta_0})$.

\section{Riesz's theorem for Lumer's Hardy spaces}
The classical  Riesz theorem on  conjugate harmonic functions  says that for every  $p\in (1,\infty)$ there
exists  a constant  $c_p$ such that
\begin{equation*}
\|U+  i V\|_p\le c_p \|U\|_p,
\end{equation*}
where $U$ is a real-valued   function in $h^p$,        $V$ is a harmonic  conjugate  to $U$ on $\mathbf{U}$,
normalized such that $V(0)=0$.  See, for  instance,    \cite[Theorem 17.26]{RUDIN.BOOK}.   Verbitsky proved
\cite{VERBITSKY.ISSLED} that the best possible  constant in the Riesz inequality  is
\begin{equation}\label{EQ.VC}
c_p =
\begin{cases}
\mathrm {sec} \frac \pi{2p}, & \mbox{if}\  1<p\le 2; \\
\mathrm {csc} \frac \pi{2p}, & \mbox{if}\  2\le p<\infty.
\end{cases}
\end{equation}
Note that,  in particular,      we have  $c_2 = \sqrt{2}$.

Our  aim in this section  is to prove the Riesz       theorem for real-valued        harmonic functions  in
the Lumer's Hardy space $(Lh)^2(\Omega)$ for  which  there  exists a conjugate. We find  that the  constant
$\sqrt{2}$ is  valid for all domains $\Omega$. This is the content  of the  following theorem.

\begin{theorem}\label{TH.MAIN}
Let  $\Omega\subseteq  \mathbf{C}$ be a domain and $\zeta_0\in \Omega$. Assume  that for real-valued $U \in
(Lh)^2(\Omega)$  there exists a harmonic conjugate of $U$  on the  domain $\Omega$, denoted by $V$, and let
it be normalized such that $V(\zeta_0)=0$. Then  we   have the  Riesz  inequality
\begin{equation}\label{EQ.R}
\|U+iV\|_{2,\zeta_0}\le \sqrt{2}\|U\|_{2,\zeta_0},
\end{equation}
with the best  possible  constant.
\end{theorem}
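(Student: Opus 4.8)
The plan is to avoid any boundary-value or conjugate-function operator argument and to work directly with least harmonic majorants, exploiting a single algebraic identity. Write $F = U + iV$, which is analytic on $\Omega$; since $V(\zeta_0) = 0$, the normalized conjugate satisfies $F(\zeta_0) = U(\zeta_0)$. The key identity is
\begin{equation*}
|F|^2 = U^2 + V^2 = 2U^2 - (U^2 - V^2) = 2U^2 - \mathrm{Re}(F^2),
\end{equation*}
in which the subtracted term $\mathrm{Re}(F^2)$ is harmonic on $\Omega$, because $F^2$ is analytic there. This is the whole point: the "error term" separating $|F|^2$ from $2U^2$ is itself harmonic, hence harmless for the majorant structure.

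First I would use this identity to produce a harmonic majorant for $|F|^2$. Since $U \in (Lh)^2(\Omega)$ is real-valued, the subharmonic function $U^2 = |U|^2$ has a least harmonic majorant $H_U$, with $U^2 \le H_U$ pointwise, so
\begin{equation*}
|F|^2 = 2U^2 - \mathrm{Re}(F^2) \le 2H_U - \mathrm{Re}(F^2),
\end{equation*}
and the right-hand side is harmonic on $\Omega$. Therefore $|F|^2$ (subharmonic, as $F$ is harmonic) has a harmonic majorant, so $F = U + iV$ lies in $(Lh)^2(\Omega)$, and its least harmonic majorant $H_F$ satisfies $H_F \le 2H_U - \mathrm{Re}(F^2)$ everywhere on $\Omega$. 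Evaluating at $\zeta_0$ and using $F(\zeta_0) = U(\zeta_0)$, so that $\mathrm{Re}(F^2(\zeta_0)) = U(\zeta_0)^2 \ge 0$, gives
\begin{equation*}
H_F(\zeta_0) \le 2H_U(\zeta_0) - U(\zeta_0)^2 \le 2H_U(\zeta_0).
\end{equation*}
Taking square roots and invoking the definition \eqref{EQ.LN} of the norm yields \eqref{EQ.R}.

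For the sharpness I would exhibit a case of equality. Take $\Omega = \mathbf{U}$ and $\zeta_0 = 0$; then $(Lh)^2(\mathbf{U}) = h^2$ with equal norms. With $U(z) = \mathrm{Re}\,z$ and $V(z) = \mathrm{Im}\,z$, the conjugate is correctly normalized ($V(0) = 0$) and $U + iV$ is the identity map, so a direct evaluation of the integral means gives $\|U\|_{2,0}^2 = 1/2$ and $\|U+iV\|_{2,0}^2 = 1$; thus \eqref{EQ.R} holds with equality, and the constant $\sqrt{2}$ cannot be lowered. Using the conformal invariance of Lumer's Hardy spaces recorded in Section~1 one may transplant this example to any simply connected domain $\Omega$ and any $\zeta_0 \in \Omega$.

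I do not expect a serious obstacle here: the argument is short, and the only points deserving care are the passage from the existence of \emph{a} harmonic majorant to the corresponding inequality for the \emph{least} harmonic majorant $H_F$, and the bookkeeping that the correction term $U^2 - V^2 = \mathrm{Re}(F^2)$ contributes, at $\zeta_0$, exactly the nonnegative quantity $U(\zeta_0)^2$ — which is precisely what makes $\sqrt{2}$, rather than something larger, the right constant, and what shows (via the example with $U(\zeta_0)=0$) that it is attained.
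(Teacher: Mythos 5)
Your argument is correct and is essentially the proof in the paper: the same identity $|F|^2 = 2U^2 - \Re(F^2)$ produces the harmonic majorant $2H_U - \Re(F^2)$, and evaluation at $\zeta_0$ with $\Re F^2(\zeta_0) = U(\zeta_0)^2 \ge 0$ gives the bound. For sharpness the paper proves the general identity $\|F\|_2^2 = 2\|U\|_2^2 - U^2(0)$ on the disk, of which your explicit example $F(z)=z$ is a correct special case yielding equality.
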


\begin{proof}
We will  use the following elementary equality, which is
easy to check:
\begin{equation}\label{EQ.Z}
|z|^2  = 2 (\Re z)^2 - \Re z^2,\quad z\in\mathbf{C}.
\end{equation}
Indeed,  since $2\Re z = z+\overline{z} $, we have
\begin{equation*}
4 (\Re z)^2      =         (z+\overline{z})^2  = z^2 + \overline{z}^2 +2z\overline {z} = 2\Re z^2 + 2|z|^2;
\end{equation*}
the equality mentioned above then follows.

Let   the analytic function    $U + iV$ be denoted  by  $F$, and let $H_U$ be  the  least harmonic majorant
of  the subharmonic function  $|U |^2$ on  $\Omega$. By applying equation  \eqref{EQ.Z} for $z  = F(\zeta)$,
$\zeta\in \Omega$,                     we obtain
\begin{equation*}\begin{split}
|F(\zeta)|^2&=2 (\Re F(\zeta))^2 - \Re F^2(\zeta)= 2  |U(\zeta) | ^2 - \Re  F^2(\zeta)
 \\& \le 2H_U (\zeta) -   \Re  F^2(\zeta),
\end{split}\end{equation*}
which proves that $2H_U  -\Re  F ^2 $                   is a harmonic   majorant  of  $| F |^2$ on $\Omega$.
It follows that $F\in (LH)^2(\Omega)$. Moreover, if $H_F$ is the least harmonic majorant     of  $|F|^2$ on
$\Omega$, we have
\begin{equation*}
H_F (\zeta)   \le 2 H_U(\zeta) - \Re F^2(\zeta).
\end{equation*}
Since $F(\zeta_0) = U(\zeta_0)$ is a real  number,  we  obtain
\begin{equation*}\begin{split}
\|F\|^2_{2,\zeta_0} & = H_F(\zeta_0)  \le 2 H_U(\zeta_0) - \Re F^2(\zeta_0) =  2 H_U(\zeta_0) -U^2(\zeta_0)
\\&\le 2 H_U(\zeta_0)   =   2\|U\|^2_{2,\zeta_0}.
\end{split}\end{equation*}
Finally, we conclude that
\begin{equation*}
\|F\|_{2,\zeta_0} \le \sqrt {2}\|U\|_{2,\zeta_0},
\end{equation*}
which is what we wanted to prove.

It is  not hard to prove that $\sqrt{2}$ is a sharp constant in the Riesz inequality \eqref{EQ.R}.   Indeed,
consider  the unit disk  $\mathbf{U}$ as the domain $\Omega$. If we again use equation \eqref{EQ.Z} for  $z
=  F(\zeta)$,   we have
\begin{equation*}
|F (\zeta)|^2  = 2 U^2 (\zeta)  - \Re F^2(\zeta).
\end{equation*}
Since $\Re F^2 $ is a harmonic function on $\mathbf{U}$, by applying the equality obtained    above and the
mean-value property  for harmonic functions,  it follows that
\begin{equation*}\begin{split}
M_2^2 ( F, r)  & =  \int_ \mathbf{T} |F_r( \zeta)|^2 \frac{|d\zeta|}{2\pi}
=2\int_\mathbf{T}U_r^2(\zeta)\frac{|d\zeta|}{2\pi}-\int _\mathbf{T}\Re{F^2_r(\zeta)}  \frac{|d\zeta|}{2\pi}
\\&= 2 M_2^2 (U, r) -  \Re F^2(0)= 2 M_2^2 (U, r) -   U^2(0).
\end{split}\end{equation*}
If we  now   let  $r\rightarrow 1$,  we  obtain
\begin{equation*}\begin{split}
 \|F\|_{2 } = \sqrt{2} \|U \|_{2 }
\end{split}\end{equation*}
provided that   $U(0)=0$.
\end{proof}

Note that the constant $\sqrt{2}$ in the Riesz  inequality  \eqref{EQ.R}    does not depend  on $\zeta_0\in
\Omega$,  although the  norm of a function  in the  Lumer's  Hardy space $(Lh)^2(\Omega)$ does. If $\Omega$
is a  simply connected  domain with at least two boundary points, this  is  expected, since the       group
of all conformal automorphisms  of the domain         $\Omega$ acts transitively on $\Omega$, i.e., for any
$\tilde{\zeta}_0\in\Omega$ there exists a  conformal automorphism   $\Phi$ of $\Omega$            such that
$\Phi(\tilde{\zeta}_0)  = \zeta_0$.    As we have already  said, the  mapping $\Phi$ induces  an  isometric
isomorphism of $(Lh)^2(\Omega)$ onto itself.  However, for multi-connected domains     it is  not true,  in
general, that the group  of all   conformal  automorphisms   acts  transitively on a domain.

\section{Remarks on the higher-dimensional setting  and a conjecture}
Lumer's Hardy spaces $(Lh)^p (\Omega)$ and $(LH)^p (\Omega)$ on   domains $\Omega$ in $\mathbf{C}^n$    are
defined in a similar way  as in the one-dimensional case \cite{LUMER.PARIS}.        However, instead of the
harmonic     majorant  we have to use a pluriharmonic majorant, i.e., a function that is   locally the real
part of an  analytic function on  $\Omega$. Therefore, the  Lumer's Hardy space  $(Lh)^p (\Omega)$ contains
all pluriharmonic functions $U$ on $\Omega$  such that $ |U|^p$ has a pluriharmonic majorant    on $\Omega$.
The analytic Lumer's  Hardy space $(LH)^p (\Omega)$ is the subspace of  $(Lh)^p (\Omega)$     that consists
of all analytic functions.  The norm on $(Lh)^p (\Omega)$  may be introduced with respect to  any  $\zeta_0
\in\Omega$ using  the least  pluriharmonic majorant as  in  the ordinary case \eqref{EQ.LN}.

The proof of Riesz's theorem given in the preceding section may be  adapted directly for     Lumer's  Hardy
spaces on domains in  $\mathbf{C}^n$. Therefore, the Riesz inequality \eqref{EQ.R} remains  valid, with the
same constant  $\sqrt{2}$,  in this setting.

We conjecture that for    every   $p\in (1,\infty)$ and  every domain $\Omega\subseteq\mathbf{C}^n$   there
holds  the version of Riesz's   theorem:             Let $F$ be an analytic function on  $\Omega$ such that
$\Im F (\zeta_0) = 0$; if $\Re  F \in (Lh)^p(\Omega)$, then $F\in (LH)^p(\Omega)$   and there is the  Riesz
inequality
\begin{equation*}
\|F\|_{p,\zeta_0}\le c_p \|\Re F\|_{p,\zeta_0},
\end{equation*}
where $c_p$ is   the Verbitsky constant \eqref{EQ.VC}.

In seventies,    Stout \cite[Theorem  IV.1]{STOUT.AJM}  proved   Riesz's  theorem  for Lumer's Hardy spaces
$(LH)^p(\Omega)$ on       $\mathcal {C}^2$-smooth domains $\Omega\subseteq \mathbf{C}^n$ (without a precise
constant in the Riesz inequality).    In this case there exists an integral representation  of  the Lumer's
norm of an    analytic function  that   is  used in order  to obtain the result.

\section{Acknowledgment}
The author wishes  to thank to the referees  and the editors of the \textsc{Monthly} for  help  in improving  the work.

\end{document}